\newtheorem{theorem}{Theorem}
\newtheorem{lemma}[theorem]{Lemma}
\newdefinition{remark}{Remark}
\journal{European Journal of Combinatorics}
\newcommand\numberthis{\addtocounter{equation}{1}\tag{\theequation}}
\begin{document}

\begin{frontmatter}

\title{Universal Sets and Cover-Free Families}

\author{Debjyoti Saharoy\corref{cor1}}
\ead{debjyoti.saharoy@xerox.com}
\address{Xerox Research Centre India, Bangalore, Karnataka, India 560103}

\author{Shailesh Vaya}
\ead{shailesh.vaya@xerox.com}
\address{Xerox Research Centre India, Bangalore, Karnataka, India 560103}

\cortext[cor1]{Corresponding author}

\begin{abstract}
We propose a polynomial time construction of an $(n,d)$-universal set over alphabet $\Sigma=\{0,1\}$, of size $d\cdot 2^{d+o(d)}\cdot\log n$. This is an improvement over the size, $d^{5}2^{2.66d}\log n$, of an $(n,d)$-universal set constructed by Bshouty, \cite{BshoutyTesters}, over alphabet $\Sigma=\{0,1\}$.
\end{abstract}

\begin{keyword}
$(n,d)$-universal set \sep cover-free families \sep t-covering arrays.
\end{keyword}

\end{frontmatter}

\linenumbers

\section{Introduction}
\label{intro}
  An $(n,d)$-universal set $\mathcal{U}$ over an alphabet $\Sigma$ is a family of vectors, $\mathcal{U}\subseteq\Sigma^{n}$, such that for any index set $S \subset [n]$, with $|S| = d$, the projection of $\mathcal{U}$ on $S$ contains all possible $|\Sigma|^d$ configurations. Universal sets have universal appeal in almost all scientific disciplines which have concerns regarding testing, where the particular coordinates whose combinatorial possibilities are to be tested are hidden from the tester. They have been intensively studied in the name of $t$-coverage arrays and are referred to as universal sets in contemporary combinatorics literature. In this short note, we elucidate a construction of universal sets using cover free families, for $|\Sigma|=2$, which implies polynomial time construction of almost optimal size universal sets for $|\Sigma|=2$. However, our construction does not depend on the construction of cover-free families. 
In the remaining part of this section we formally define an $(n,d)$-universal set and $(n,(r,s))-CFF$. And also state the related results on the size of their constructions. In section \ref{construction} we give the construction of universal sets in Lemma \ref{ConstructionLemma} and prove the size bounds in Theorem \ref{MainTheorem}. In section \ref{Consequences} we point some other consequences of this result. 
\subsection{$(n,d)$-Universal Set}
\label{universalsets}
  An $(n,d)$-universal set over an alphabet $\Sigma$ is a set $\mathcal{U}\subseteq\Sigma^{n}$ such that for every $1\leq i_1<i_2<\cdots<i_d\leq n$ and every $(\sigma_1,\cdots,\sigma_d) \in \Sigma^{d}$ there is \textit{\textbf{a}} $\in \mathcal{U}$ such that $a_{i_j} = \sigma_{j}$ for all $j = 1,\cdots,d.$ If $|\Sigma| = q$, then $U(n,d,q)$ is the size of the smallest $(n,d)$-universal set over the alphabet $\Sigma$. The union bound shows that there is an $(n,d)$-universal set over an alphabet $\Sigma$ of size 
\begin{align*}
U(n,d,q) \leq dq^{d}(ln \frac{n}{d} + ln q) = O(dq^{d}\log n).
\end{align*}
\par Obviously, finding a small $(n,d)$-universal set is a \textit{d-restriction problem} \cite{naor1995splitters}. For $q=2$, a lower bound of $\Omega(2^d\log n)$ was proved in \cite{kleitman1973lowerbound}. The polynomial time (i.e $poly(q^d,n)$) construction for this problem of size $d^{O(\log d/\log q)}q^d\log n$ for $q<d$, \cite{naor1995splitters}, \cite{alon1992construction}, was improved by Bshouty \cite{BshoutyTesters}. Specifically, for $q=2$, \cite{BshoutyTesters} gave a polynomial time construction of $(n,d)$-universal set of size not exceeding $d^5 2^{2.66d}\log n$.  

\subsection{Cover-Free Family}
\label{coverfree}
\par Let us fix positive integers $r,s,n$ with $r,s < n$ and let $d:=r+s$. Let $X$ be a set with $N$ elements and let $\mathcal{B}$ be a set of subsets (blocks) of $X$, $|\mathcal{B}|=n$. Then $(X,\mathcal{B})$ is a $(n,(r,s))$\textit{-cover-free family} $((n,(r,s))-CFF)$, \cite{kautz}, if for any r blocks $B_1,\cdots,B_w \in \mathcal{B}$ and any other $s$ blocks $A_1,\cdots,A_r \in \mathcal{B}$, we have
\begin{align*}
\bigcap\limits_{i=1}^{r} B_{i} \not\subseteq \bigcup\limits_{i=1}^{s} A_{j}.
\end{align*} 
\par Equivalently, given an $(n,(r,s))$-CFF $\mathcal{F}$, denote $N=|\mathcal{F}|$ and construct the $N*n$ boolean matrix $A$ whose rows are the elements of $\mathcal{F}$ and the columns can be thought of as the characteristic vectors of subsets. If $\mathcal{B} = {B_1,\cdots,B_n}$ denotes the set of blocks corresponding to the columns, then A is the incidence matrix of $\mathcal{B}$, i.e. the $i^{th}$ element of $X$ is in $B_j$ iff $A_{i,j} = 1$.
\par The CFF property of $\mathcal{F}$ implies that for any $r$ blocks 
$B_1,\cdots,B_r \in \mathcal{B}$ and any other $s$ blocks $A_1,\cdots,A_s \in \mathcal{B}$ (distinct from the $B$'s), there is an element of $X$ contained in all the $B's$ but not in any of the $A$'s. Let $N(n,(r,s))$ denote the minimum size of any $(n,(r,s))$-CFF. 
\par D'yachkov et. al.'s breakthrough result, \cite{Dyachkov2014}, implies that for $s,n \rightarrow \infty$
\begin{align*}
N(n,(r,s)) = \Theta(N(r,s)\cdot\log n). \numberthis \label{Dyachkov} 
\end{align*} 
where
\begin{align*}
N(r,s) := \frac{d{d\choose r}}{\log {d\choose r}}. 
\end{align*}
\par This bound is non-constructive. Bshouty et. al., \cite{BshoutyAlmostOptCFF} calls an (n,(r,s))-CFF $\mathcal{F}$ \textit{almost optimal}, if
it's size $N=|\mathcal{F}|$ satisfies
\begin{align*}
N = N(r,s)^{1+o(1)}\cdot \log n  
\end{align*}
and for $r=O(d)$
\begin{align*}
N = N(r,s)^{1+o(1)}\cdot \log n = 2^{H_2(r/d)d+o(d)}\cdot \log n \numberthis \label{eqn:almostOptCFF}
\end{align*}
where $H_2(x)$ is the binary entropy function. The term $o(1)$
is independent of $n$ and tends to $0$ as $d \rightarrow \infty$.
A CFF family $\mathcal{F}$ is said to be constructed in linear time
if it can be constructed in time $O(N(r,s)^{1+o(1)}\cdot \log n \cdot n)$.
\par Bshouty \cite{BshoutyTesters,BshoutyLT} and Bshouty et. al. \cite{BshoutyAlmostOptCFF} constructed almost optimal $(n,(r,s))$-CFF $\mathcal{F}$ for $r<d^{o(1)}$ and
$d^{o(1)}<r<\omega(d/(\log\log d\log\log\log d))$ respectively, in linear time. Fomin et. al. \cite{Fomin} constructed 
almost optimal $(n,(r,s))$-CFF $\mathcal{F}$ for $r>\omega(d/(\log\log d\log\log\log d))$ in linear time.

\section{Construction of Universal Sets}
\label{construction}
  We give the explicit (i.e polynomial time) construction of $(n,d,q)$-universal set $\mathcal{U}$, for $q=2$, using $(n,(r,s))$-CFF $\mathcal{F}$. We use the explicit linear time construction of almost optimal size $(n,(r,s))$-CFF $\mathcal{F}$ given in \cite{BshoutyTesters,BshoutyAlmostOptCFF,BshoutyLT,Fomin}. 

\par \noindent \textit{Notation:} Let us denote an $(n,d,q)$-universal set $\mathcal{U}$ by $\mathcal{U}_{(n,d,q)}$. We suppress $q$, if $q=2$. We denote an $(n,(r,s))$-CFF $\mathcal{F}$ by $\mathcal{F}_{(n,(r,s))}$. Our construction is based on the following lemma.
\begin{lemma}\label{ConstructionLemma}
\begin{align*}
\mathcal{U}_{(n,d)} = \bigcup\limits_{i=0}^{d-1} \mathcal{F}_{(n,(i,d-i))} \numberthis \label{2UniversalSet} 
\end{align*}
$\mathcal{U}_{(n,d)}$ is an $(n,d)$-universal set over alphabet $\Sigma=\{0,1\}$.
If $N(n,(d/2,d/2))$ denotes the size of an optimal $(n,(d/2,d/2))$-CFF, then $|\mathcal{U}_{(n,d)}|\leq d\cdot N(n,(d/2,d/2))$. Moreover, $|\mathcal{U}_{n,d}|$ is in asymptotic equivalence with $N(n,(r,s))$ for $r=O(d)$.  
\end{lemma}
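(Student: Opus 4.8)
The plan is to verify the universal-set property directly from the defining property of cover-free families, and then to read off the size bound from the monotonicity of $N(n,(r,s))$ in the split. First I would fix an arbitrary coordinate set $S=\{i_1<\cdots<i_d\}\subseteq[n]$ and an arbitrary target $(\sigma_1,\ldots,\sigma_d)\in\{0,1\}^d$, and let $k=|\{j:\sigma_j=1\}|$ be the number of prescribed ones, so that $d-k$ coordinates are prescribed to be $0$. The key observation is that a single row of the incidence matrix of $\mathcal{F}_{(n,(r,s))}$ is literally a vector in $\{0,1\}^n$, and that the $(n,(r,s))$-CFF property is precisely the statement that for any $r$ columns designated ``$1$'' and any disjoint $s$ columns designated ``$0$'', some row takes value $1$ on the former and $0$ on the latter. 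I would therefore select the term $\mathcal{F}_{(n,(k,d-k))}$ of the union, designate the coordinates $\{i_j:\sigma_j=1\}$ as the $B$-blocks and the coordinates $\{i_j:\sigma_j=0\}$ as the $A$-blocks (distinct columns, as required), and invoke the CFF property to produce a row $a$ with $a_{i_j}=\sigma_j$ for all $j$. Since $a\in\mathcal{F}_{(n,(k,d-k))}\subseteq\mathcal{U}_{(n,d)}$, this establishes universality for the chosen $S$ and target.

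The step I expect to be the main obstacle is the handling of the extreme patterns, and in particular the all-ones target $k=d$. A pattern with $k$ ones is matched only by the term with $r=k$, because an $(n,(r,s))$-CFF with $r+s=d$ can pin down all $d$ coordinates exactly when $r$ equals the number of prescribed ones; the downward closure of the CFF property in $(r,s)$ does not help here, since every term of the union already has $r+s=d$. As the displayed union runs only to $i=d-1$, the all-ones configuration is produced by no listed term, so I would adjoin the single all-ones vector (equivalently, include the $(d,0)$ term, which is realized by that one vector alone); this adds exactly one element and is invisible in the asymptotics. The all-zeros target $k=0$ is already covered by the $i=0$ term, whose extremal formula degenerates but is witnessed by the single all-zeros vector.

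For the size bound I would use the stated estimate $N(n,(r,s))=\Theta(N(r,s)\log n)$ with $N(r,s)=d\binom{d}{r}/\log\binom{d}{r}$. Since $x\mapsto x/\log x$ is increasing for $x>e$ and $\binom{d}{i}$ is maximized at $i=d/2$, the optimal size $N(n,(i,d-i))$ is maximized over $i$ at the balanced split; summing the terms of the union then gives $|\mathcal{U}_{(n,d)}|\le d\cdot N(n,(d/2,d/2))$, the single adjoined vector being absorbed. Finally, for the asymptotic equivalence with $N(n,(r,s))$ when $r=O(d)$, I would observe that the leading factor $d$ is $2^{o(d)}$, whereas $N(d/2,d/2)=2^{H_2(1/2)d+o(d)}=2^{d+o(d)}$ grows exponentially in $d$; hence the factor $d$ is swallowed by the $N(r,s)^{1+o(1)}$ of the almost-optimal regime~(\ref{eqn:almostOptCFF}), and $|\mathcal{U}_{(n,d)}|=2^{d+o(d)}\log n$ matches the claimed bound. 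I expect these last two steps to be routine once the combinatorial core and the all-ones boundary case are settled.
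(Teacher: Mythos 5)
Your proposal is correct and, at its core, takes the same route as the paper: classify the target patterns by their number $k$ of ones, match a weight-$k$ pattern by the $(k,d-k)$ term of the union via the CFF property (the rows of the incidence matrix being exactly the vectors of the family), and bound the size by $d$ times the balanced term. Where you genuinely depart from the paper, you are right and the paper is not: the union \eqref{2UniversalSet} stops at $i=d-1$, so the all-ones pattern ($k=d$) is matched by no term; the paper's proof-by-contradiction glosses over this, since its step ``$(\sigma_1,\ldots,\sigma_d)\notin\mathcal{F}_{(n,(r,s))}$ is impossible'' presupposes that $\mathcal{F}_{(n,(d,0))}$ belongs to the union, which it does not. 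Your patch --- adjoin the all-ones vector, equivalently include the degenerate $(d,0)$ term --- is exactly the needed correction, costs one row, and is invisible in the bound; your parallel observation that the $i=0$ term (also degenerate, since the paper's CFF definition requires $r,s\ge 1$) is witnessed by the single all-zeros row settles the other boundary. Your size argument is also somewhat cleaner than the paper's: instead of the complementation trick and the vague claim that $r=O(d)$ ``dominates,'' you bound each of the $d$ terms by the balanced one using the monotonicity of $x/\log x$ and of $\binom{d}{i}$ up to $i=d/2$. Note, however, that both your argument and the paper's pass through D'yachkov's bound \eqref{Dyachkov}, which is a $\Theta$-estimate, so the inequality $|\mathcal{U}_{(n,d)}|\le d\cdot N(n,(d/2,d/2))$ holds only up to the implicit constants: there is no exact combinatorial monotonicity between $N(n,(i,d-i))$ and $N(n,(d/2,d/2))$, since neither parameter pair dominates the other coordinate-wise.
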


\begin{proof}
First we prove that $\mathcal{U}_{(n,d)}$ is indeed an $(n,d)$-universal set. The proof is by contradiction. Let us assume that exists some  $1\leq i_1<i_2<\cdots<i_k\leq n$ and $(\sigma_1,\cdots,\sigma_d)\in \Sigma^d$ such that for no $a\in \mathcal{U}_{(n,d)}$, $a_{i_j}=\sigma_j$ for all $j=1\cdots,d$. Without loss of generality, let us assume that the chosen $(\sigma_1,\cdots,\sigma_d)$ has $r$ $1's$ and $s$ $0's$ where $r+s=d$.
  In other words, the $(\sigma_1,\cdots,\sigma_d)\not\in \mathcal{F}_{(n,(r,s))}$. This is not possible by the definition of $(n,(r,s))$-CFF $\mathcal{F}$. Hence the contradiction.
\par We now calculate the size of the $\mathcal{U}_{(n,d)}$. 
\begin{align*}
|\mathcal{U}_{(n,d)}| &= |\bigcup\limits_{i=0}^{d-1} \mathcal{F}_{(n,(i,d-i))}| \\
&= 2\cdot|\bigcup\limits_{i=0}^{\frac{d}{2}-1} \mathcal{F}_{(n,(i,d-i))}| \\
&\leq d\cdot N(n,(d/2,d/2)) \numberthis \label{UniversalSetBound}
\end{align*} 
\par The second equality follows from the fact that we can consider $r\leq d/2$, because if not, one can construct an $\mathcal{F}_{(n,(s,r))}$ and take the set of complement vectors. The first inequality follows from the fact that the size of $\mathcal{F}_{(n,(r,s))}$ for $r=O(d)$ dominates $r=O(1)$, $r=\omega(1)$ and
$r=o(d)$. It must be noted that the bound on the size of $\mathcal{U}_{(n,d)}$ is in asymptotic equivalence with bound on the size of an $(n,(r,s))$-CFF $\mathcal{F}$ for $r=O(d)$..
We can claim so because any \textit{optimal} or \textit{almost optimal} construction of an $(n,(r,s))$-CFF $\mathcal{F}$ must obey the tight bound given by D'yachkov et. al, \cite{Dyachkov2014}, in eq. $(1)$, and the bound (i.e the quantity $N(n,(r,s))$) in eq. $(1)$ is monotonically increasing in $d$. \\   
\end{proof}

  Bshouty, \cite{BshoutyTesters}, constructed an $(n,d)$-universal set of size $d^{5}2^{2.66d}\log n$ over an alphabet $\Sigma=\{0,1\}$. To the best of our knowledge this is the best polynomial time construction for this problem over an alphabet of size $2$. We give the following theorem which improves this size.
\begin{theorem}\label{MainTheorem}
$\mathcal{U}_{n,d}$ is an explicitly (i.e in polynomial time) constructed $(n,d)$-universal set over alphabet $\Sigma=\{0,1\}$, of size $d\cdot 2^{d+o(d)}\cdot\log n$. 
\end{theorem}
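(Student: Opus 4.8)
The plan is to deduce the theorem directly from Lemma~\ref{ConstructionLemma} together with the almost optimal cover-free family size recorded earlier for $r=O(d)$. Lemma~\ref{ConstructionLemma} already certifies that the set $\mathcal{U}_{(n,d)}=\bigcup_{i=0}^{d-1}\mathcal{F}_{(n,(i,d-i))}$ is an $(n,d)$-universal set over $\Sigma=\{0,1\}$ and that $|\mathcal{U}_{(n,d)}|\leq d\cdot N(n,(d/2,d/2))$. Hence universality requires no further argument; the two points the theorem adds are the explicit value $2^{d+o(d)}$ of the size factor and the polynomial-time assembly of the whole object. I would treat these in turn.

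For the size, I would realise each summand $\mathcal{F}_{(n,(i,d-i))}$ by an almost optimal $(n,(i,d-i))$-CFF, whose cardinality is $2^{H_2(i/d)\,d+o(d)}\cdot\log n$. Since the binary entropy $H_2$ is maximised at $1/2$ with $H_2(1/2)=1$, every summand has size at most $2^{d+o(d)}\cdot\log n$, the dominant contribution coming precisely from the balanced case $i=d/2$, in agreement with the lemma's bound $d\cdot N(n,(d/2,d/2))$. Summing the $d$ blocks yields $|\mathcal{U}_{(n,d)}|\leq d\cdot 2^{d+o(d)}\cdot\log n$. I would also record that the $(1+o(1))$ appearing in $N(r,s)^{1+o(1)}$ is harmless here, since $(2^{d+o(d)})^{1+o(1)}=2^{d+o(d)}$, so no stray factor escapes the exponent.

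For explicitness I would build the individual blocks using the linear-time almost optimal CFF constructions already cited: Bshouty's construction~\cite{BshoutyTesters,BshoutyLT} for $r<d^{o(1)}$, the construction of Bshouty et.\ al.~\cite{BshoutyAlmostOptCFF} for $d^{o(1)}<r<\omega(d/(\log\log d\log\log\log d))$, and that of Fomin et.\ al.~\cite{Fomin} for $r>\omega(d/(\log\log d\log\log\log d))$. Together these ranges exhaust every index $i\in\{0,\dots,d-1\}$, so each of the $d$ blocks is produced in linear time $O(N(i,d-i)^{1+o(1)}\cdot\log n\cdot n)$; forming their union, after the complementation trick of Lemma~\ref{ConstructionLemma} that restricts attention to $r\leq d/2$, costs at most a factor $d$ more and stays polynomial in $2^{d}$ and $n$.

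The step I expect to demand the most care is the explicitness claim rather than the size arithmetic. One must check that the three cited constructions, whose domains of applicability are phrased only through asymptotic thresholds, genuinely tile $0\leq i\leq d-1$ with no gap, and that each one really meets its almost optimal size on its portion of the range; only then does the per-block bound $2^{d+o(d)}\cdot\log n$ hold uniformly and the union inherit both the universality of Lemma~\ref{ConstructionLemma} and the target size. By contrast, once $H_2(1/2)=1$ is invoked the size computation is essentially immediate.
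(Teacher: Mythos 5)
Your proposal is correct and follows essentially the same route as the paper: invoke Lemma~\ref{ConstructionLemma} for universality and the size bound, substitute the almost optimal CFF size $N(r,s)^{1+o(1)}\cdot\log n = 2^{H_2(1/2)d+o(d)}\cdot\log n$ with $H_2(1/2)=1$, and obtain polynomial time from the cited linear-time CFF constructions. Your per-block bounding of each $\mathcal{F}_{(n,(i,d-i))}$ via $H_2(i/d)\leq 1$ (rather than quoting only the balanced case) and your explicit check that the three constructions' ranges of $r$ tile all indices $i$ are slightly more careful renderings of steps the paper compresses, but they do not change the argument.
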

\begin{proof}
Using inequality $(4)$ and almost optimal construction of an $(n,(r,s))$-CFF by \cite{BshoutyTesters,BshoutyAlmostOptCFF,BshoutyLT,Fomin} for $r=O(d)$,
\begin{align*}
|\mathcal{U}_{(n,d)}| &\leq d\cdot N(n,(d/2,d/2)) \\
&= d\cdot N(d/2,d/2)^{1+o(1)}\cdot\log n \\
&= d\cdot 2^{H_{2}(1/2)d+o(d)}\cdot\log n \\
&= d\cdot 2^{d+o(d)}\cdot \log n 
\end{align*}
The first equality follows from eq. $(2)$. The polynomial time taken in the construction follows from the fact that we take the union of $d$, $(n,(d/2, d/2))$-CFF each of which is constructed in linear time.\\
\end{proof}  
\begin{remark}
We also make the observation that the construction of $(n,d,2)$-universal set in Lemma \ref{ConstructionLemma} can also be extended for small $q$'s  greater than $2$ (like $q=3,4$) by adapting Colbourn et. al.'s, \cite{colbourn2006products}, construction of product of covering arrays.  
\end{remark}
  
%
 
\section{Some Consequences of $(n,d)$-Universal Set}
\label{Consequences}
The improved construction of the $(n,d)$-universal set have direct
consequences in the problem of fault-tolerance of an hypercube,
\cite{seroussi1988vector}. It also improves the running time  of Blum and Rudich's, \cite{berger1991simulating}, learning algorithm for $k$-term DNFs as well as Bshouty's, \cite{bshouty1993exact}, learning algorithm for $k$-CNF. As pointed out by Naor in 
\cite{naor1995splitters}, improved construction of universal sets
also improves the non-approximability results of the \textit{set cover} problem. It also finds application in distributed colouring: provides a constructive argument of the
existence of recoloring protocols of Szegedy and Vishwanathan \cite{szegedy1993locality}.

\bibliography{Paper}

\end{document}